\documentclass[11pt]{amsart}
\usepackage{amsfonts,amssymb,amsmath,euscript}
\usepackage{hyperref}

\theoremstyle{plain}
 \newtheorem{thm}{Theorem}[section]

\numberwithin{equation}{section}
\theoremstyle{definition}

\newcommand{\mbR}{\mathbb{R}}
\newcommand{\mbC}{\mathbb{C}}

\newcommand{\clH}{\mathcal{H}}
\newcommand{\clK}{\mathcal{K}}

\newcommand{\euE}{{\EuScript E}}
\newcommand{\euH}{{\EuScript H}}
\newcommand{\Tr}{\operatorname{Tr}}

\newcommand{\sgn}{\operatorname{sgn}}
\renewcommand{\Im}{\operatorname{Im}}
\renewcommand{\Re}{\operatorname{Re}}
\newcommand{\hlambda}{{\mathfrak{h}_\lambda}}

\newcommand{\dom}{\operatorname{dom}} 
\newcommand{\ran}{\operatorname{ran}}
\newcommand{\rank}{\operatorname{rank}}
\newcommand{\scal}[1]{\left\langle #1 \right\rangle}
\newcommand{\xia}{\xi^{(a)}}
\newcommand{\xis}{\xi^{(s)}}

\newcommand{\ind}{\operatorname{ind}}


\textwidth 16cm \textheight 22cm \oddsidemargin 0cm
\evensidemargin 0cm
\begin{document}
\title{A remark on the imaginary part of resonance points}
\author{Nurulla Azamov and Tom Daniels}
\address{College of Science and Engineering
   \\ Flinders University 
   \\ South Rd, Tonsley, SA 5042 Australia}
\email{nurulla.azamov@flinders.edu.au}
\email{tom.daniels@flinders.edu.au}

\keywords{scattering matrix, scattering phase, resonance point, Breit-Wigner formula}
 \subjclass[2010]{ 
     Primary 47A40, 47A55, 47A70.
}


\begin{abstract}
In this paper we prove for rank one perturbations that 
the imaginary part of a resonance point is inversely proportional by a factor of $-2$ to the rate of change of the scattering phase, as a function of the coupling parameter, evaluated at the real part of the resonance point.
This equality is in agreement with the Breit-Wigner formula from quantum scattering theory. 
For more general relatively trace class perturbations, 
we also give a formula for the spectral shift function in terms of resonance points, non-real and real.
\end{abstract}
\maketitle

\section{Introduction}
Given a self-adjoint operator~$H_0$ and a relatively compact self-adjoint operator~$V,$
a resonance point $r_z$ of the triple $(z; H_0,V)$ can be defined as a pole of the meromorphic operator valued function 
\[
  s \mapsto R_z(H_s) = R_z(H_0)(1 + sVR_z(H_0))^{-1},
\]
where $H_s = H_0 + sV$ and $R_z(H) = (H - z)^{-1}.$ 
We stress that this is in contrast to the interpretation of resonances as poles of the resolvent as a function of energy~$z.$
Resonance points, considered as functions of~$z,$ are branches of multivalued analytic functions (of Herglotz type).
Under certain conditions on the pair $(H_0,V),$ which ensure the existence of scattering theory (namely the limiting absorption principle, see e.g.~\cite[Chapter 6]{Yaf92}), 
resonance points $r_z$ have limit values $r_{\lambda+i0}$ for a.e.~$\lambda \in \mbR.$ 
In case the resonance point $r_{\lambda+i0}$ is real, it has several interpretations, as discussed in detail in the introduction of~\cite{Aza16}. 
One such interpretation is that one of the scattering phases $\theta_j(\lambda;r)$ of the scattering matrix $S(\lambda; H_r, H_0),$
considered as an analytic function of the coupling parameter~$r,$
suffers a sudden jump by an integer multiple of~$2\pi$ when~$r$ crosses the real resonance point~$r_\lambda$ 
(in fact such a jump is only revealed when~$\lambda$ is perturbed slightly to $\lambda+i\epsilon$).
By a scattering phase $\theta_j(\lambda;r)$ we mean that $e^{i\theta_j(\lambda;r)}$ is an eigenvalue 
of the scattering matrix $S(\lambda; H_r, H_0).$

The limit values of resonance points $r_{\lambda+i0}$ with non-zero imaginary parts have not been investigated elsewhere. 
In this paper for rank-one perturbations, in which case there is only 
one non-zero scattering phase $\theta_1(\lambda;r),$ we prove the formula:
\begin{equation} \label{F: the formula}
  \frac{\partial\theta_1(\lambda;r)}{\partial r}\Big|_{r=\Re r_{\lambda+i0}} 
       = - \frac {2}{\Im r_{\lambda+i0}}, \quad \text{a.e.} \ \lambda\in\mbR. 
\end{equation}

Since $r_{\lambda+i0}$ is a pole of the scattering matrix, this formula is in agreement with the Breit-Wigner formula from quantum scattering theory, see e.g.~\cite[Chapter XVIII]{Boh},~\cite[Chapter 13]{Tay},
with the difference that the phase is considered as a function of the coupling parameter instead of energy. 

Also established here is a more general formula, 
which applies in the case that~$V$ is a certain kind of relatively trace class perturbation. 
For comparison, some analogous considerations in terms of the energy parameter appear in~\cite[\S9.3]{BirYaf} along with further references.

\section{Proof}

\begin{thm}\label{T: rank one case} 
If~$H_0$ is a self-adjoint operator and~$V$ is a rank one self-adjoint operator, then 
for a.e.~$\lambda$ the formula~\eqref{F: the formula} holds.
In this formula, for $z$ outside the essential spectrum of~$H_0,$ 
the number $r_z$ is the unique pole of the meromorphic function 
$
  \mbC \ni s \mapsto VR_z(H_s) 
$ 
with $r_{\lambda+i0} =\lim_{y \to 0^+} r_{\lambda+iy},$ and $e^{i\theta_1(\lambda;r)}$ is the 
unique non-trivial eigenvalue of the scattering matrix~$S(\lambda; H_r,H_0).$ 
\end{thm}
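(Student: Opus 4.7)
\emph{Plan of proof.} Write the rank-one self-adjoint perturbation in the form $V=\langle\,\cdot\,,\phi\rangle\phi$ with $\phi\in\hilb$, and introduce the Borel (Herglotz) transform of the spectral measure of $\phi$,
\[
  F(z)=\langle (H_0-z)^{-1}\phi,\phi\rangle.
\]
Because $V$ has rank one, every quantity appearing in the statement is controlled by $F$ alone, and the whole theorem reduces to two explicit formulas plus a short algebraic substitution.

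\emph{Step 1: the resonance point.} Apply a Sherman--Morrison-type argument to $I+sV(H_0-z)^{-1}$: it is invertible precisely when $1+sF(z)\neq 0$, and in that range $(H_0+sV-z)^{-1}V$ is a rational function of $s$ with a single simple pole. A direct computation identifies this pole as
\[
  r_z=-\frac{1}{F(z)}.
\]
Passing to boundary values $z=\lambda+i0$, standard Herglotz/limiting-absorption facts give $r_{\lambda+i0}=-1/F(\lambda+i0)$ for Lebesgue a.e.\ $\lambda$, whence
\[
  \Re r_{\lambda+i0}=-\frac{\Re F(\lambda+i0)}{|F(\lambda+i0)|^{2}},
  \qquad
  \Im r_{\lambda+i0}=\frac{\Im F(\lambda+i0)}{|F(\lambda+i0)|^{2}}.
\]

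\emph{Step 2: the scattering phase.} The perturbation determinant of $H_r$ relative to $H_0$ is $\Delta_r(z)=\det(I+rV(H_0-z)^{-1})=1+rF(z)$. Krein's trace formula then identifies the spectral shift function as $\xi(\lambda;H_r,H_0)=\pi^{-1}\arg\Delta_r(\lambda+i0)$, and the Birman--Krein formula $\det S(\lambda;H_r,H_0)=e^{-2\pi i\xi(\lambda)}$ combined with the fact that only one eigenvalue of $S$ is nontrivial yields
\[
  e^{i\theta_1(\lambda,r)}=\frac{1+r\overline{F(\lambda+i0)}}{1+rF(\lambda+i0)},
  \qquad
  \theta_1(\lambda,r)=-2\arg\bigl(1+rF(\lambda+i0)\bigr)\!\!\pmod{2\pi}.
\]
Differentiating in $r$ gives the clean identity
\[
  \frac{\partial\theta_1(\lambda,r)}{\partial r}
    =-\frac{2\,\Im F(\lambda+i0)}{\bigl|1+rF(\lambda+i0)\bigr|^{2}}.
\]

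\emph{Step 3 and main obstacle.} Substituting $r=\Re r_{\lambda+i0}=-\Re F(\lambda+i0)/|F(\lambda+i0)|^{2}$ into the denominator is a one-line calculation that yields $|1+rF(\lambda+i0)|^{2}=(\Im F(\lambda+i0))^{2}/|F(\lambda+i0)|^{2}$, so the derivative collapses to $-2|F(\lambda+i0)|^{2}/\Im F(\lambda+i0)=-2/\Im r_{\lambda+i0}$, which is \eqref{F: the formula}. The algebra is essentially trivial; the nontrivial content is packed into Step~2 (Krein's formula and Birman--Krein specialised to the rank-one case, extracting the single nontrivial scattering phase) and into the a.e.\ existence of the boundary values $F(\lambda+i0)$ used in Step~1, which holds Lebesgue a.e.\ because $F$ is Herglotz. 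The natural full-measure set on which the identity is meaningful is the support of the absolutely continuous spectral measure of $\phi$, i.e.\ where $\Im F(\lambda+i0)>0$; off this set both sides of \eqref{F: the formula} reduce to trivial expressions.
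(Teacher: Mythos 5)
Your proof is correct, and it takes a genuinely different route from the paper's. You reduce everything to the Aronszajn--Krein function $F(z)=\scal{R_z(H_0)\phi}{\phi}$: the pole of $s\mapsto R_z(H_s)V$ is located explicitly as $r_z=-1/F(z)$ by Sherman--Morrison, and the phase comes from the perturbation determinant $1+rF$ via Krein's trace formula and Birman--Krein, giving the closed form $\theta_1'(\lambda,r)=-2\Im F(\lambda+i0)/\abs{1+rF(\lambda+i0)}^2$, after which the theorem is a one-line substitution. The paper instead stays inside its constructive stationary framework: it differentiates $S(\lambda;H_r,H_0)$ in the coupling constant, takes traces to get $\Tr(S'(r)S^{-1}(r))=-2i\Tr\brs{V\Im R_{\lambda+i0}(H_r)}$, and identifies $\Tr(VR_z(H_r))$ with the eigenvalue $(r-r_z)^{-1}$ of the rank-one operator $R_z(H_r)V$, arriving at the Lorentzian $\theta_1'(r)=-2\beta/(\abs{r-\alpha}^2+\beta^2)$ --- of which your expression is simply another parametrisation, since $1+rF=-F\cdot(r-r_z)$. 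What your argument buys is elementary self-containedness in the rank-one case, with no need for fiber Hilbert spaces, evaluation operators or the derivative formula for $S$. What the paper's argument buys is that the trace computation survives essentially verbatim for relatively trace class $V$, which is exactly what is needed for the summed formula \eqref{F: sum theta=int beta} later in the paper; your determinant route does not isolate the individual resonance points $r^j_{\lambda+i0}$ in that generality. Two points you should make explicit to fully close the argument: (i) the scattering matrix in the theorem is the one defined by the explicit stationary formulas of \cite{Az3v6}, so you need its (known) a.e.\ coincidence with the object to which Birman--Krein applies; (ii) $\det S=e^{-2\pi i\xi}$ fixes $\theta_1$ only mod $2\pi$, but since only $\partial\theta_1/\partial r$ enters \eqref{F: the formula}, your continuous branch $\theta_1=-2\arg(1+rF(\lambda+i0))$ --- well defined for all real $r$ because $1+rF(\lambda+i0)$ never vanishes when $\Im F(\lambda+i0)>0$ --- disposes of the ambiguity.
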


Before proceeding to the proof we discuss its context and sketch an important lemma whose details lie outside the scope of this paper. 
Since~$V$ has rank one, the default premise of this theorem -- the limiting absorption principle -- holds; 
this is necessary for the existence of the scattering matrix and therefore also the scattering phase.
Because the proof involves considering these objects as functions of the coupling parameter, it requires the constructive approach to stationary scattering theory given in~\cite{Aza11,AzaDan18} and outlined in the introduction to~\cite{Aza16}. 
In this approach, objects such as the wave matrices $w_\pm(\lambda; H_r,H_0)$ and scattering matrix $S(\lambda; H_r,H_0)$ are defined by explicit formulas for all $r$ except a discrete set, as long as $\lambda$ belongs to a pre-defined set of full Lebesgue measure in~$\mbR,$ which comes from the limiting absorption principle. 
Moreover, the scattering matrix can be differentiated with respect to~$r,$ as discussed further below.

The proper setting for the proof of Theorem~\ref{T: rank one case} and its generalisation Theorem~\ref{T: sum theta=int beta}, is outlined by the following assumptions which are common within scattering theory.
\begin{enumerate}
  \item[$(1)$] $H_0$ is a self-adjoint operator on a (separable complex) Hilbert space~$\clH.$ 
  \item[$(2)$] $V$ is a symmetric form on~$\clH,$ which admits the decomposition $V = F^*JF$ on the form domain of $H_0,$ i.e.
  \[
   V\colon(f,g)\mapsto \scal{Ff,JFg}, \quad f,g\in\dom |H_0|^{1/2},
  \] 
  where $F \colon \clH \to \clK$ is a closed operator 
  and $J$ is a self-adjoint bounded operator
  on~$\clK.$
  \item[$(3)$] The sandwiched resolvent $T_z(H_0) = FR_z(H_0)F^*,$ 
  where $R_z(H) = (H-z)^{-1}$ is the resolvent of~$H,$ is (or more precisely extends to) a compact operator
  for some (and thus for any) $z \notin \sigma(H_0),$ the spectrum of~$H_0.$
  \item[$(4)$] $F$ is bounded or $H_0$ is semi-bounded.
\end{enumerate}
These conditions imply that the perturbed operator $H_r := H_0 + rV,$ $r\in\mbR,$ is well-defined, as an operator-sum if $F$ is bounded or a form-sum if $H_0$ is semi-bounded. 
In fact we will need the following strengthened version of condition~$(3).$
\begin{enumerate}
  \item[$(3')$] The sandwiched resolvent $T_z(H_0) = FR_z(H_0)F^*$ is (extends to) a trace class operator for some (and thus any) $z \notin \sigma(H_0).$
\end{enumerate}
Given $(3'),$ it follows from the second resolvent identity that~$T_z(H_r)$ also belongs to the trace class for any $z\in\mbC\setminus\mbR.$ 
This condition implies the limiting absorption principle in the following form: 
the set of points~$\lambda\in\mbR$ for which the uniform limit $T_{\lambda+i0}(H_r) := \lim_{y \to 0^+} T_{\lambda+iy}(H_r)$ exists, has full Lebesgue measure in~$\mbR.$ 
In addition, the limit of the imaginary part $\Im T_{\lambda+i0}(H_r) = \lim_{y\to 0^+}\Im T_z(H_r)$ exists in the trace class norm for a.e.~$\lambda\in\mbR.$
The full set of points~$\lambda$ for which both of these limits exist will be denoted by~$\Lambda(H_r,F).$

If $V$ is relatively compact with respect to~$H_0$ then $F = \sqrt{|V|}$ and $J = \sgn V$ satisfy the conditions $(1)$--$(3).$ 
In this case for~$z\in\mbC\setminus\mbR,$ the compact operators $VR_z(H_0)$ and $JT_z(H_0)$ share the same non-zero eigenvalues and it follows that resonance points~$r_z$ corresponding to~$z$ can be defined as poles of the meromorphic function
\[
 s\mapsto T_z(H_s) = T_z(H_0)(1 + sJT_z(H_0))^{-1},
\]
extending the definition given above.
This definition also makes sense for~$z=\lambda+i0$ provided~$\lambda$ belongs to~$\Lambda(H_0,F).$
What's more, for~$\lambda\in\Lambda(H_0,F),$ it happens that $\lambda\in\Lambda(H_r,F)$ if and only if~$r\in\mbR$ is non-resonant at~$\lambda.$

Note that if $V$ is finite-rank, then so can be the choice of $F,$ hence in this case conditions~$(3')$ and~$(4)$ are also satisfied.

\smallskip
We now briefly review how the scattering matrix can be realised as a function of the coupling parameter by taking a constructive approach to stationary scattering (for more information see~\cite{Aza11,AzaDan18} and the introduction to~\cite{Aza16}). 
A~fibre Hilbert space~$\hlambda(H_0)$ is defined for any $\lambda\in\Lambda(H_0,F)$ as the closed range
\[
 \hlambda(H_0) = \mathrm{cl}\left(\ran\sqrt{\Im T_{\lambda+i0}(H_0)}\right) \subset \clK
\]
and these give rise to the direct integral 
\begin{align*}
 \euH(H_0) &:= \int_{\Lambda(H_0,F)}^\oplus\hlambda(H_0)\,d\lambda 
 \\&= \big\{f\in L_2(\Lambda(H_0,F),\clK) : f(\lambda)\in\hlambda(H_0)\, \text{ for a.e. } \lambda\in\Lambda(H_0,F) \big\}.
\end{align*}
The {\em evaluation operator} $\euE_\lambda(H_0)$ is defined on the range of~$F^*$ by
\[
 \euE_\lambda(H_0) = \sqrt{\pi^{-1}\Im T_{\lambda+i0}(H_0)}(F^*)^{-1}
\] 
and the collection $\euE(H_0) = \{\euE_\lambda(H_0) : \lambda\in\Lambda(H_0,F)\},$ considered as an operator from~$\clH$ to~$\euH(H_0)$ defined on~$\ran F^*,$ extends to a partial isometry which diagonalises the absolutely continuous part of~$H_0$ (\cite[Theorem~5.1]{AzaDan18}, \cite[Theorem~3.4.2]{Aza11}):
\[
 \euE(H_0)\colon \clH \to \euH(H_0), \qquad H_0^{(a)} = \euE^*(H_0)M_{\lambda}\euE(H_0),
\]
where $M_\lambda$ denotes the operator of multiplication by~$\lambda.$
For any non-resonant~$r,$ the wave matrices $w_{\pm}(\lambda;H_r,H_0)$ may then be defined as unitary transforms from $\hlambda(H_0)$ to $\hlambda(H_r),$ which are uniquely determined for $f,g\in\ran F^*$ by 
\[
 \scal{\euE_\lambda(H_r)f, w_\pm(\lambda;H_r,H_0)\euE_\lambda(H_0)g} = \lim_{y\to 0^+}\frac y\pi \scal{R_{\lambda+iy}(H_r)f,R_{\lambda+iy}(H_0)g}.
\]
The scattering matrix $S(\lambda;H_r,H_0) = w^*_+(\lambda;H_r,H_0)w_-(\lambda;H_r,H_0)$ can be shown to satisfy the stationary formula 
\begin{equation}\label{F: stat formula}
 S(\lambda;H_r,H_0) = 1 - 2ir\sqrt{\Im T_{\lambda+i0}(H_0)}J(1 + rT_{\lambda+i0}(H_0)J)^{-1}\sqrt{\Im T_{\lambda+i0}(H_0)},
\end{equation}
for any $\lambda\in\Lambda(H_0,F)$ and non-resonant~$r.$
This allows the scattering matrix, for fixed such~$\lambda,$ to be considered as a function of~$r.$ 
Although its factor $(1 + rT_{\lambda+i0}(H_0)J)^{-1}$ is meromorphic with poles at resonance points, since the scattering matrix is unitary and hence bounded for non-resonant $r\in\mbR,$ it admits analytic continuation to a neighbourhood of the real axis.

A significant part of the proof of Theorem~\ref{T: rank one case} (see~\cite[Theorem~5.7]{AzaDan18}, \cite[Theorem~7.3.3]{Aza11}) is the following fact, obtained from~\eqref{F: stat formula}.
The derivative of the scattering matrix at any non-resonant~$r$ is given by
\begin{multline}\label{F: S'(r)}
  \frac {d S(\lambda; H_r, H_0)}{dr} \\ = - 2 i\, w_+(\lambda; H_0,H_r)
  \sqrt{\Im T_{\lambda+i0}(H_r)}J\sqrt{\Im T_{\lambda+i0}(H_r)} \,
  w_+(\lambda; H_r, H_0)S(\lambda; H_r, H_0).
\end{multline}
Assuming condition~($3'$), this derivative can be taken in the trace class norm. 

\begin{proof}[Proof of Theorem~\ref{T: rank one case}]
Let $\lambda$ be a real number from the full set $\Lambda(H_0,F).$
Since~$\lambda$ is fixed, we write~$S(r)$ for $S(\lambda;H_r,H_0)$ and $\theta_j(r)$ for~$\theta_j(\lambda;r).$
Since the scattering matrix is unitary, it follows from~\eqref{F: S'(r)} that 
\begin{equation}\label{F: infinitesimal SM}
  S'(r)S^{-1}(r) = - 2 i\, w_+(\lambda; H_0,H_r) 
\sqrt{\Im T_{\lambda+i0}(H_r)}J\sqrt{\Im T_{\lambda+i0}(H_r)}\, 
w_+(\lambda; H_r,H_0).
\end{equation}
Further, since 
$
 w_+(\lambda; H_r,H_0)w_+(\lambda; H_0,H_r) = 1_{\hlambda(H_r)}
$
(see~\cite[Theorem~5.3]{AzaDan18}, \cite[Corollary~5.3.8]{Aza11}), 
taking traces of both sides of the equality~\eqref{F: infinitesimal SM} gives 
\begin{equation}\label{F: Tr(EVE)}
  \Tr\left(S'(r)S^{-1}(r)\right) = - 2 i \Tr\left(\sqrt{\Im T_{\lambda+i0}(H_r)}J\sqrt{\Im T_{\lambda+i0}(H_r)}\right) 
\end{equation}
The trace on the right can be interpreted to be associated to the trace class operators on the whole space~$\clK,$ rather than the fibre Hilbert space~$\hlambda(H_r),$ since $\ker\sqrt{\Im T_{\lambda+i0}(H_r)} = \hlambda(H_r)^\perp.$
It follows using condition $(3')$ that the equality~\eqref{F: Tr(EVE)} can rewritten as
\begin{align}
  \Tr\left(S'(r)S^{-1}(r)\right) & = - 2 i \Tr\left(J\Im T_{\lambda+i0}(H_r)\right) \nonumber
  \\ &= - \lim_{y\to 0^+} 2 i\Tr\left(J\Im T_{\lambda+iy}(H_r)\right) \nonumber
  \\ &= - \lim_{y\to 0^+} \Tr\left(JT_{\lambda+iy}(H_r)\right) - \Tr\left(JT_{\lambda-iy}(H_r)\right). \label{F: Tr(Im A)}
\end{align}
Since by the premise $V$ has rank 1, in this case $\Tr(JT_z(H_0)) = \Tr(VR_z(H_0)).$
We recall that a resonance point $r_z$ corresponding to the triple $(z; H_0,V)$ is a complex number such that $(r-r_z)^{-1}$ is an eigenvalue of the compact operator~$VR_z(H_r).$ 
In this case the operator~$VR_z(H_r)$ has only one eigenvalue. 
Therefore, with $z=\lambda+iy$ and using the fact that $\bar r_{z} = r_{\bar z},$ we find that 
\begin{equation*}
  \begin{split}
     \Tr\left(S'(r)S^{-1}(r)\right) & = - \lim_{y\to 0^+} \left((r-r_{z})^{-1} - (r-\bar r_{z})^{-1}\right)
           \\ & = - \lim_{y\to 0^+} \frac {-\bar r_{z} + r_{z}}{(r-r_{z})(r-\bar r_{z})}
           \\ & = - \lim_{y\to 0^+} \frac {2i \Im r_{z}}{(r-r_{z})(r-\bar r_{z})}.
  \end{split}
\end{equation*}
Taking the limit and replacing $r$ by $\Re r_{\lambda+i0}$ (assuming it is not resonant, i.e.~$\Im r_{\lambda+i0}\neq 0$), 
\begin{equation*}
     \Tr\left(S'(r)S^{-1}(r)\right)\big|_{r = \Re r_{\lambda+i0}}
       = - \frac {2i}{\Im r_{\lambda+i0}}.
\end{equation*}

On the other hand, since $\rank(V) = 1,$ the scattering matrix~$S(r)$ is one-dimensional. 
Hence, it is the operator of multiplication by its eigenvalue:
$
  S(r) = e^{i\theta_1(r)}\cdot 1_{\hlambda(H_0)}.
$
It follows that 
\begin{equation*}
  \begin{split}
     \Tr\left(S'(r)S^{-1}(r)\right)\big|_{r = \Re r_{\lambda+i0}} 
           & = \frac{d e^{i\theta_1(r)}}{dr} e^{-i\theta_1(r)}\big|_{r = \Re r_{\lambda+i0}}
        \\ & = i\theta'_1(\Re r_{\lambda+i0}).
  \end{split}
\end{equation*}
Comparing the last two formulas completes the proof. 
\end{proof}

In the proof of Theorem~\ref{T: rank one case} the following equality is derived:
\[
    \theta'_1(\lambda;r)
      = - \frac {2\beta }{|r-\alpha|^2 + |\beta|^2}, 
\]
where $\alpha := \Re r_{\lambda+i0},$ and  $\beta := \Im r_{\lambda+i0}.$
If the phase $\theta_1(\lambda;r)$ is chosen so that $\theta_1(\lambda;0)=0,$ then integrating gives the formula
\[
 \theta_1(\lambda;1) = - \int_0^1 \frac {2\beta }{|r-\alpha|^2 + |\beta|^2}\,dr.
\]

Scattering phases are well-known to be closely related to the spectral shift function (SSF) $\xi(\lambda;H_1,H_0)$ (see e.g.~\cite{BirYaf,Pus01}).
In this regard is the following formula for the absolutely continuous~SSF $\xia(\lambda;H_1,H_0)$ (see~\cite[\S5.3]{AzaDan18}, \cite[Theorem~9.2.2]{Aza11}; cf.~\cite{Pus01})
\begin{equation}\label{F: xia = sum theta}
  \xia(\lambda; H_1,H_0) = -\frac 1{2\pi} \sum_{j=1}^\infty \theta_j(\lambda; 1),
\end{equation}
where $e^{i\theta_j(\lambda;r)},$ $r\in[0,1],$ are the eigenvalues of the scattering matrix~$S(\lambda;H_r,H_0),$ which are continuously enumerated with phases chosen so that $\theta_j(\lambda;0)=0.$  
On the other hand, the singular~SSF $\xis(\lambda;H_1,H_0)$ is known to be given in terms of resonance points by the {\em total resonance index} (see \cite[Theorem~1.3]{AzaDan18}, \cite[Theorem~6.3.2]{Aza16})
\begin{equation}\label{F: xis = res ind}
 \xis(\lambda;H_1,H_0) = \sum_{r_\lambda \in [0,1]} \ind_{res}(\lambda; H_{r_\lambda},V),
\end{equation}
which is the difference $N_+ - N_-$ in the numbers $N_\pm$ of resonance points which converge to the interval~$[0,1]$ from the half-plane~$\mbC_\pm.$

The following generalisation of Theorem~\ref{T: rank one case} shows that the absolutely continuous SSF can also be expressed in terms of resonance points.

\begin{thm}\label{T: sum theta=int beta}
Let $H_0$ and $V = F^*JF$ satisfy conditions~$(1),$~$(2),$~$(3'),$ and~$(4),$ and let $\theta_j(\lambda;1)$ be as in~\eqref{F: xia = sum theta}.
Then for a.e.~$\lambda\in\mbR,$
\begin{equation} \label{F: sum theta=int beta}
  \sum_{j=1}^\infty \theta_j(\lambda; 1) 
       = - \int_0^1 \sum_{j=1}^\infty \frac {2\beta_j}{|r-\alpha_j|^2 + |\beta_j|^2}\,dr,
\end{equation}
where $r^j_{\lambda+i0} = \alpha_j + i \beta_j$ is the $j$th resonance point with non-zero $\beta_j,$ 
that is, $(r-r^j_{\lambda+i0})^{-1}$ is the $j$th eigenvalue of $JT_{\lambda+i0}(H_r).$ 
\end{thm}

Combining~\eqref{F: xia = sum theta} and \eqref{F: sum theta=int beta} gives
\[
  \xia(\lambda; H_b,H_a) 
      = \frac 1{2\pi} \int_a^b \sum_{j=1}^\infty \frac {2\beta_j}{|r-\alpha_j|^2 + |\beta_j|^2}\,dr.
\]
To obtain a formula for the SSF, we can add~\eqref{F: xis = res ind}:
\begin{equation*}\label{F: SSF in terms of resonance data}
  \xi(\lambda; H_1,H_0) 
      = \frac 1{2\pi} \int_0^1 \sum_{j=1}^\infty \frac {2\beta_j}{|r-\alpha_j|^2 + |\beta_j|^2}\,dr
    + \sum_{r_\lambda \in [0,1]} \ind_{res}(\lambda; H_{r_\lambda},V).
\end{equation*}
This shows that resonance points $r_{\lambda+i0}^j$ with zero imaginary part $\beta_j$ still contribute to the SSF in the form of resonance indices. 

\begin{proof}[Proof of Theorem~\ref{T: sum theta=int beta}]
The equality~\eqref{F: Tr(Im A)} holds by the same argument as before.
Since $JT_z(H_r)$ is trace class for $z=\lambda+iy,$ $y>0,$ its eigenvalues $(r-r^j_z)^{-1}$ are summable and we have
\begin{align*}
 \Tr\left(S'(r)S^{-1}(r)\right) &= - \lim_{y\to 0^+}\Bigg( \sum_{j=1}^\infty(r-r^j_z)^{-1} - \sum_{j=1}^\infty(r-\bar r^j_z)^{-1}\Bigg)
 \\ &= - \lim_{y\to 0^+}\Bigg( \sum_{j=1}^\infty \frac {2i \Im r^j_{z}}{(r-r^j_{z})(r-\bar r^j_{z})} \Bigg)
 \\ &= - \sum_{j=1}^\infty \frac {2i\beta_j}{|r-\alpha_j|^2 + |\beta_j|^2},
\end{align*}
where the interchange of limits in the last equality can be justified using the fact that the sums involved are uniformly bounded. 
Indeed, each sum is equal to the trace of $2iJ\Im T_z(H_r),$ which converges to $2i J\Im T_{\lambda+i0}(H_r)$ in the trace class. 
By integrating, it remains to show that
\begin{equation*}
 \int_0^1 \Tr\left(S'(r)S^{-1}(r)\right) dr =  i \sum_{j=1}^\infty \theta_j(\lambda;1).
\end{equation*}
This can be seen as a consequence of the fact that, when divided by $-2\pi i,$ both sides are known representations of the absolutely continuous SSF (see~\cite[Theorem~3.2 and \S5.3]{AzaDan18}, \cite[\S\S8-9]{Aza11}):
\begin{align*}
\xia(\lambda;H_b,H_a) &= \frac{1}{\pi} \int_0^1 \Tr\left(J\Im T_{\lambda+i0}(H_r)\right) dr
 \\&= -\frac{1}{2\pi} \sum_{j=1}^\infty \theta_j(\lambda;1). \qedhere
\end{align*}
\end{proof}

Following from condition~$(3')$ and the proof of Theorem~\ref{T: sum theta=int beta} is the equality
\begin{equation}\label{F: TrImA=ImTrA}
 \Tr (\Im A_{\lambda+i0}(H_r)) = \sum_{j=1}^\infty \Im\sigma_j(r),
\end{equation}
where $\sigma_j(r)$ are the eigenvalues of the operator $A_{\lambda+i0}(H_r) := JT_{\lambda+i0}(H_r).$ 
The root vectors of~$A_{\lambda+i0}(H_r)$ do not depend on the choice of non-resonant~$r$ (see \cite[Proposition~3.1.2]{Aza16}) and we note that if $J=1,$ which may be assumed in the case that $V\geq 0,$ the equality~\eqref{F: TrImA=ImTrA} implies that the system of root vectors is complete; in fact these are equivalent conditions by~\cite[Theorem~V-2.1]{GohKre}.

\end{document}